\newtheorem{lemma}{Lemma}
\newtheorem{theorem}{Theorem}
\renewcommand{\theenumii}{\@roman\c@enumii}
\begin{document}

\title{On zero-sum game formulation of non zero-sum game\thanks{This work was supported by Japan Society for the Promotion of Science KAKENHI Grant Number 15K03481 and 18K01594.}}

\author{%
Yasuhito Tanaka\thanks{yasuhito@mail.doshisha.ac.jp}\\[.01cm]
Faculty of Economics, Doshisha University,\\
Kamigyo-ku, Kyoto, 602-8580, Japan.\\}

\date{}

\maketitle
\thispagestyle{empty}

\vspace{-.5cm}

\begin{abstract}
We consider a formulation of a non zero-sum $n$ players game by an $n+1$ players zero-sum game.  We suppose the existence of the $n+1$-th player in addition to $n$ players in the main game, and virtual subsidies to the $n$ players which is provided by the $n+1$-th player. Its strategic variable affects only the subsidies, and does not affect choice of strategies by the $n$ players in the main game. His objective function is the opposite of the sum of the payoffs of the $n$ players. We will show 1) The minimax theorem by Sion (Sion(1958)) implies the existence of Nash equilibrium in the $n$ players non zero-sum game. 2) The maximin strategy of each player in $\{1, 2, \dots, n\}$ with the minimax strategy of the $n+1$-th player is equivalent to the Nash equilibrium strategy of the $n$ players non zero-sum game. 3) The existence of Nash equilibrium in the $n$ players non zero-sum game implies Sion's minimax theorem  for pairs of each of the $n$ players and the $n+1$-th player.
\end{abstract}

\vspace{-.5cm}

\begin{description}
	\item[Keywords:] zero-sum game, non zero-sum game, minimax theorem, virtual subsidy
\end{description}

\begin{description}
	\item[JEL Classification:] C72
\end{description}

\newpage

\section{Introduction}

We consider a formulation of a non zero-sum $n$ players game by an $n+1$ players zero-sum game. We suppose the existence of the $n+1$-th player in addition to $n$ players in the main game, and virtual subsidies to the $n$ players which is provided by the $n+1$-th player. Its strategic variable affects only the subsidies, and does not affect choice of strategies by the $n$ players in the main game. His objective function is the opposite of the sum of the payoffs of the $n$ players, then the game with $n+1$ players, $n$ players in the main game and the $n+1$-th player, is a zero-sum game.

We will show the following results.

\begin{enumerate}
	\item The minimax theorem by Sion (\cite{sion}) implies the existence of Nash equilibrium in the $n$ players non zero-sum game.
	\item The maximin strategy of each player in $\{1, 2, \dots, n\}$ with the minimax strategy of the $n+1$-th player is equivalent to the Nash equilibrium strategy of the $n$ players non zero-sum game.
\item The existence of Nash equilibrium in the $n$ players non zero-sum game implies Sion's minimax theorem  for pairs of each of the $n$ players and the $n+1$-th player.
\end{enumerate}

\section{The model and the minimax theorem}

There are $n$ players Player 1, 2, $\dots$, $n$ in a non zero-sum game. The strategic variable of Player $i$ is denoted by $x_i$. The common strategy space of the players is denoted by $X$, which is a compact set. There exists another player, Player $n+1$. His strategic variable is $f$, We consider virtual subsidies to each player other than Player $n+1$, $\psi(f)$, which is provided by Player $n+1$ and is equal for any player. It is zero at the equilibrium. 

The payoff of Player $i\in \{1, 2, \dots, n\}$ is written as
\[\pi_i(x_1,x_2,\dots,x_n, f)=\varphi_i(x_1,x_2,\dots,x_n)+\psi(f),\ i\in \{1, 2, \dots, n\}.\]
The objective function of Player $n+1$ is
\[\pi_{n+1}=-(\pi_1+\pi_2+\dots \pi_n)=-\sum_{i=1}^n\varphi_i(x_1,x_2,\dots,x_n)-n\psi(f).\]
The strategy space of Player $n+1$ is denoted by $F$ which is a compact set.  Player $n+1$ is not a dummy player because he can determine the value of its strategic variable. We assume
\[\min_{f\in F}\psi(f)=0.\]
Denote 
\[a=\arg\min_{f\in F}\psi(f).\]
We postulate that this is unique. The game with Player 1, 2, $\dots$, $n$ and Player $n+1$ is a zero-sum game because
\begin{align*}
&\pi_1(x_1, x_2, \dots, x_n, f)+\pi_2(x_1, x_2, \dots, x_n, f)+\dots +\pi_{n}(x_1, x_2, \dots, x_n, f)\\
+&\pi_{n+1}(x_1, x_2, \dots, x_n, f)=0.
\end{align*}

Sion's minimax theorem (\cite{sion}, \cite{komiya}, \cite{kind}) for a continuous function is stated as follows.
\begin{lemma}
Let $X$ and $Y$ be non-void convex and compact subsets of two linear topological spaces, and let $f:X\times Y \rightarrow \mathbb{R}$ be a function that is continuous and quasi-concave in the first variable and continuous and quasi-convex in the second variable. Then
\[\max_{x\in X}\min_{y\in Y}f(x,y)=\min_{y\in Y}\max_{x\in X}f(x,y).\] \label{l1}
\end{lemma}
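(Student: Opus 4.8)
This is Sion's minimax theorem in the special case where $X$ and $Y$ are both compact and $f$ is separately continuous, so the two iterated extrema are genuine maxima and minima. The easy half I would dispatch immediately: for any $x'\in X$ and $y'\in Y$ one has $\min_{y\in Y}f(x',y)\le f(x',y')\le\max_{x\in X}f(x,y')$, so taking $\max_{x'}$ on the left and $\min_{y'}$ on the right gives $\max_{x\in X}\min_{y\in Y}f(x,y)\le\min_{y\in Y}\max_{x\in X}f(x,y)$. (All four extrema exist: $f(x,\cdot)$ and $f(\cdot,y)$ are continuous on the compacta, $x\mapsto\min_y f(x,y)$ is upper semicontinuous, and $y\mapsto\max_x f(x,y)$ is lower semicontinuous.)

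For the reverse inequality I would show there is no duality gap. Fix an arbitrary $c<\min_{y\in Y}\max_{x\in X}f(x,y)$; it suffices to produce $\bar x\in X$ with $f(\bar x,y)\ge c$ for every $y\in Y$, for then $\max_{x}\min_{y}f(x,y)\ge c$, and letting $c$ increase to $\min_{y}\max_{x}f(x,y)$ concludes. For $y\in Y$ set $C(y):=\{x\in X:f(x,y)\ge c\}$, which is closed by continuity of $f(\cdot,y)$, convex by quasi-concavity of $f(\cdot,y)$, and nonempty since $\max_{x}f(x,y)\ge\min_{y'}\max_{x}f(x,y')>c$. Because $X$ is compact, $\bigcap_{y\in Y}C(y)\ne\emptyset$ as soon as the family $\{C(y)\}_{y\in Y}$ has the finite intersection property, i.e.\ as soon as for every finite $\{y_1,\dots,y_m\}\subseteq Y$ there is an $x$ with $\min_{1\le j\le m}f(x,y_j)\ge c$. (Since this only concerns $f$ on $X\times\mathrm{conv}\{y_1,\dots,y_m\}$, one may, via affine pullbacks, even reduce to finite-dimensional $X$ and $Y$, though this is not essential.)

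The finite-intersection statement is the substance of the theorem, and it is the step I expect to be the real obstacle: it is exactly here that the two \emph{separate} hypotheses --- quasi-concavity in $x$ and quasi-convexity in $y$ --- must be welded into a \emph{joint} conclusion, and no argument handling one variable at a time can achieve this. I would prove it by induction on $m$, the decisive case being $m=2$: given $y_1,y_2$, one works on the segment joining them in $Y$ together with a suitably chosen segment in $X$, and uses quasi-concavity, quasi-convexity, and the connectedness of an interval to contradict $c<\min_y\max_x f$. This is essentially the device in Komiya's elementary proof (\cite{komiya}); alternatively the same topological content can be routed through the KKM lemma or Helly's theorem as in Sion (\cite{sion}, \cite{kind}), or --- since $f$ is continuous and $X,Y$ compact --- the value of the associated two-person zero-sum game can be extracted from a fixed-point theorem. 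Once such a two-point lemma is in hand, the induction on $m$ and the compactness argument above are routine.
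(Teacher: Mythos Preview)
Your sketch is a faithful outline of the standard elementary proof of Sion's theorem (the easy inequality, reduction to the finite intersection property via compactness, and the two-point lemma \`a la Komiya for the inductive step), and as a proof plan it is correct. However, the paper does not prove this lemma at all: it simply states it and cites \cite{sion}, \cite{komiya}, \cite{kind}, remarking that the formulation follows \cite{kind}. So there is nothing to compare against --- the paper treats Lemma~\ref{l1} as a black-box input, not something to be established.

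If the intent of the exercise is to supply a proof where the paper gives none, then your plan is appropriate and essentially the Komiya route you already name. The one place you should be careful is the two-point step: your description (``one works on the segment joining them in $Y$ together with a suitably chosen segment in $X$, and uses \dots\ connectedness of an interval'') is vague precisely at the crux. In Komiya's argument the contradiction comes from a nested-interval/bisection construction on the segment $[y_1,y_2]$, using quasi-convexity in $y$ to control the sets $\{x:f(x,y)\ge c\}$ along the segment and quasi-concavity in $x$ to ensure their convexity; you would need to spell that out rather than gesture at it. Everything else (existence of the iterated extrema, the finite-intersection reduction, the induction on $m$) is routine as you say.
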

We follow the description of this theorem in \cite{kind}.

Let $x_k$'s for $k\neq i$ be given, then $\pi_i$ is a function of $x_i$ and $f$. We can apply Lemma \ref{l1} to such a situation, and get the following equation.
\begin{equation}
\max_{x_i\in X}\min_{f\in F}\pi_i(x_1, x_2, \dots, x_n, f)=\min_{f\in F}\max_{x_i\in X}\pi_i(x_1, x_2, \dots, x_n, f).\label{as0}
\end{equation}
We assume that $\arg\max_{x_i\in X}\min_{f\in F}\pi_i(x_1, x_2, \dots, x_n, f)$, $\arg\min_{f\in F}\max_{x_i\in X}\pi_i(x_1, x_2, \dots, x_n, f)$ and so on are unique, that is, single-valued. We also assume that the best responses of players in any situation are unique.

\section{The main results}

Choice of $f$ by Player $n+1$ has an effect only on the fixed subsidy for each player. The optimal value of $f$ for Player $n+1$, which is equal to $a$, is determined independently of $x_1$, $x_2$, $\dots$, $x_n$, and the optimal values of the strategic variables for Player 1, 2, $\dots$, $n$ are determined independently of $f$. We have
\[\pi_i(x_1,x_2,\dots, x_n,f)-\psi(f)=\pi_i(x_1,x_2,\dots, x_n,a)=\varphi_i(x_1,x_2,\dots, x_n),\ i\in \{1, 2, \dots,n\},\]
for any value of $f$. Thus,
\[\arg\max_{x_i\in X}\pi_i(x_1,x_2,\dots, x_n,f)=\arg\max_{x_i\in X}\pi_i(x_1,x_2,\dots, x_n,a)\ \mathrm{for\ any}\ f,\]
and
\begin{equation}
\arg\min_{f\in F}\pi_i(x_1,x_2,\dots, x_n,f)=a,\ i\in \{1, 2, \dots, n\}.\label{a}
\end{equation}

First we show the following result.
\begin{theorem}
\begin{enumerate}
	\item Sion's minimax theorem (Lemma \ref{l1}) implies the existence of Nash equilibrium in the non zero-sum main game.\label{one}
	\item The maximin strategy of each player in $\{1, 2, \dots, n\}$ with the minimax strategy of Player $n+1$ is equivalent to its Nash equilibrium strategy of the non zero-sum main game.\label{two}
\end{enumerate}
\label{t1}
\end{theorem}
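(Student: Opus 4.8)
\emph{Proof plan.} The plan is to treat, for each main-game player $i\in\{1,\dots,n\}$ and each fixed choice $x_{-i}$ of the other main-game players, the pair (Player $i$, Player $n+1$) as a two-person zero-sum game on $X\times F$ with payoff $\pi_i(x_1,\dots,x_n,f)$, and to read off its maximin and minimax strategies from the equality (\ref{as0}) that Lemma~\ref{l1} has already supplied, combined with (\ref{a}) and the identity $\pi_i(x_1,\dots,x_n,f)-\psi(f)=\varphi_i(x_1,\dots,x_n)$. Since $\min_{f\in F}\pi_i(x_1,\dots,x_n,f)=\pi_i(x_1,\dots,x_n,a)=\varphi_i(x_1,\dots,x_n)$ and $\max_{x_i\in X}\pi_i(x_1,\dots,x_n,f)=\max_{x_i\in X}\varphi_i(x_1,\dots,x_n)+\psi(f)$, equation (\ref{as0}) becomes
\[\max_{x_i\in X}\varphi_i(x_1,\dots,x_n)=\min_{f\in F}\Bigl(\max_{x_i\in X}\varphi_i(x_1,\dots,x_n)+\psi(f)\Bigr),\]
and, invoking the uniqueness assumptions, the maximin strategy of Player $i$ given $x_{-i}$ is $\arg\max_{x_i\in X}\varphi_i(x_1,\dots,x_n)$ --- that is, exactly Player $i$'s best response in the main game --- while the accompanying minimax strategy of Player $n+1$ is $a$. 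I would also observe that this same $f=a$ solves $\min_f\max_{x_i}\pi_i$ for every $i$ and maximizes $\pi_{n+1}$ for every $(x_1,\dots,x_n)$, so Player $n+1$ facing all $n$ players simultaneously causes no inconsistency.

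Granting the first paragraph, Part~\ref{two} is immediate. If for every $i$ the strategy $x_i^{*}$ is the maximin strategy of Player $i$ given $x_{-i}^{*}$, then $x_i^{*}\in\arg\max_{x_i\in X}\varphi_i(x_i,x_{-i}^{*})$ for all $i$, which says precisely that $(x_1^{*},\dots,x_n^{*})$ is a Nash equilibrium of the main game; conversely, at a Nash equilibrium of the main game each $x_i^{*}$ maximizes $\varphi_i(\cdot,x_{-i}^{*})$ and hence, by the displayed equality and uniqueness, coincides with the maximin strategy of Player $i$ given $x_{-i}^{*}$, the minimax strategy of Player $n+1$ being $a$. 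This is the asserted equivalence.

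For Part~\ref{one} I would glue the per-player maximin strategies together. Set $b_i(x_{-i})=\arg\max_{x_i\in X}\varphi_i(x_i,x_{-i})$; the uniqueness assumption makes $b_i$ single-valued, and Berge's maximum theorem (joint continuity of $\varphi_i$, compactness of $X$) makes it continuous, so $b=(b_1,\dots,b_n)\colon X^n\to X^n$ is a continuous self-map of a nonempty convex compact set. Brouwer's fixed-point theorem then produces $\mathbf{x}^{*}$ with $x_i^{*}=b_i(x_{-i}^{*})$ for all $i$; paired with $f=a$ this is a Nash equilibrium of the $(n+1)$-player zero-sum game, whose restriction to the main-game coordinates is a Nash equilibrium of the non zero-sum game. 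The contribution of Sion's theorem is precisely the identification, made in the first paragraph, of the maximin map against Player $n+1$ with the best-response map $b$, which is what licenses the statement that Sion's minimax theorem ``implies'' Nash existence in this formulation.

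The genuine obstacle is Part~\ref{one}: in contrast with the two-person zero-sum case, where Sion's theorem alone yields a saddle point, recovering an $n$-player equilibrium still requires a fixed-point step, so the real work lies in checking that the hypotheses implicit in the model --- convexity of $X$ (already used by Lemma~\ref{l1}), joint continuity of each $\varphi_i$, and the stated single-valuedness of best responses --- are exactly what makes $b$ a continuous self-map on $X^n$. Verifying continuity of each $b_i$ through Berge's theorem, though routine, is the one place beyond the subsidy-decomposition algebra and Sion's equality where an extra ingredient enters, and it should be written out with care; if one prefers not to assume single-valuedness one would instead keep the best-response correspondences and apply Kakutani's theorem, but under the paper's uniqueness hypotheses Brouwer suffices.
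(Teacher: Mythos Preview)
Your approach matches the paper's in structure: both identify Player $i$'s maximin strategy against Player $n+1$ with Player $i$'s best response in the main game, and then seek a profile at which all $n$ players are simultaneously best-responding. Your derivation via the additive decomposition $\pi_i=\varphi_i+\psi(f)$ is more direct than the paper's chain of inequalities (\ref{t2-1})--(\ref{t2-4}), but both reach the same identification. The one substantive difference is that the paper simply writes ``Let $(\tilde{x}_1,\dots,\tilde{x}_n)$ be the solution'' of the fixed-point system and proceeds, supplying no existence argument; you instead invoke Berge's maximum theorem for continuity of the single-valued best-response map and Brouwer's theorem to produce the fixed point. Your treatment is the more careful one, and your closing observation --- that Sion's equality (\ref{as0}) identifies the maximin map with the best-response map but that a separate fixed-point step is still required to obtain the equilibrium --- is a more accurate account of the logical dependencies than the paper's own phrasing.
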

\begin{proof}
Let $(\tilde{x}_1,\tilde{x}_2,\dots, \tilde{x}_n)$ be the solution of the following equation.
\[
\begin{cases}
\tilde{x}_1=\arg\max_{x_1\in X}\min_{f\in F}\pi_1(x_1, \tilde{x}_2,\dots, \tilde{x}_n, f)\\
\tilde{x}_2=\arg\max_{x_2\in X}\min_{f\in F}\pi_2(\tilde{x}_1, x_2,\tilde{x}_3,\dots, \tilde{x}_n, f)\\
\dots\\
\tilde{x}_n=\arg\max_{x_n\in X}\min_{f\in F}\pi_n(\tilde{x}_1, \tilde{x}_2,\dots,\tilde{x}_{n-1},x_n,f).\\
\end{cases}
\]
Then, we have
\begin{align}
&\max_{x_i\in X}\min_{f\in F}\pi_i(\tilde{x}_1, \dots, x_i,\dots, \tilde{x}_n,f)=\min_{f\in F}\pi_i(\tilde{x}_1, \dots, \tilde{x}_i,\dots, \tilde{x}_n,f)\label{t2-1}\\
=&\min_{f\in F}\max_{x_i\in X}\pi_i(\tilde{x}_1, \dots, x_i,\dots, \tilde{x}_n,f),\ i\in \{1, 2, \dots, n\}.\notag
\end{align}
Since
\[\pi_i(\tilde{x}_1, \dots, \tilde{x}_i,\dots, \tilde{x}_n,f)\leq \max_{x_i\in X}\pi_i(\tilde{x}_1, \dots, x_i,\dots, \tilde{x}_n,f),\ i\in \{1, 2, \dots, n\},\]
and
\[\min_{f\in F}\pi_i(\tilde{x}_1, \dots, \tilde{x}_i,\dots, \tilde{x}_n,f)=\min_{f\in F}\max_{x_i\in X}\pi_i(\tilde{x}_1, \dots, x_i,\dots, \tilde{x}_n,f),\ i\in \{1, 2, \dots, n\},\]
we get
\begin{align}
&\arg\min_{f\in F}\pi_i(\tilde{x}_1, \dots, \tilde{x}_i,\dots, \tilde{x}_n,f)=\arg\min_{f\in F}\max_{x_i\in X}\pi_i(\tilde{x}_1, \dots, x_i,\dots, \tilde{x}_n,f),\label{t2-2}\\
&\ i\in \{1, 2, \dots, n\}.\notag
\end{align}
Because the game is zero-sum,
\[\sum_{i=1}^n\pi_i(\tilde{x}_1, \dots, x_i,\dots, \tilde{x}_n,f)=-\pi_{n+1}(\tilde{x}_1, \dots, x_i,\dots, \tilde{x}_n,f).\]
Therefore, from (\ref{a})
\begin{align}
&\arg\min_{f\in F}\pi_i(\tilde{x}_1, \dots, x_i,\dots, \tilde{x}_n,f)\label{t2-3}\\
=&\arg\max_{f\in F}\pi_{n+1}(\tilde{x}_1, \dots, x_i,\dots, \tilde{x}_n,f)=a,\ i\in \{1, 2, \dots, n\}.\notag
\end{align}
From (\ref{t2-1}), (\ref{t2-2}) and  (\ref{t2-3}) we obtain
\begin{align}
&\min_{f\in F}\max_{x_i\in X}\pi_i(\tilde{x}_1, \dots, x_i,\dots, \tilde{x}_n,f)=\max_{x_i\in X}\pi_i(\tilde{x}_1, \dots, {x}_i,\dots, \tilde{x}_n,a)\label{t2-4}\\
=&\min_{f\in F}\pi_i(\tilde{x}_1, \dots, \tilde{x}_i,\dots, \tilde{x}_n,f)=\pi_i(\tilde{x}_1, \dots, \tilde{x}_i,\dots, \tilde{x}_n,a),\ i\in \{1, 2, \dots, n\}\notag
\end{align}
(\ref{t2-3}) and (\ref{t2-4}) mean that $(x_1, x_2,\dots,x_n, f)=(\tilde{x}_1,\tilde{x}_2,\dots,\tilde{x}_n,a)$ is a Nash equilibrium of the zero-sum game with $n+1$ players.

$\tilde{x}_1$, $\tilde{x}_2$, $\dots$, $\tilde{x}_n$ are determined independently of $f$. Thus,
\[\max_{x_i\in X}\varphi_i(\tilde{x}_1, \dots, {x}_i,\dots, \tilde{x}_n)=\varphi_i(\tilde{x}_1, \dots, \tilde{x}_i,\dots, \tilde{x}_n),\ i\in \{1, 2, \dots, n\}.\]
Therefore,  $(\tilde{x}_1,\tilde{x}_2, \dots, \tilde{x}_n)$ is a Nash equilibrium of the non zero-sum game with Player 1, 2, $\dots$, $n$.
\end{proof}

Next we show

\begin{theorem}
The existence of Nash equilibrium in the $n$ players non zero-sum game implies Sion's minimax theorem for pairs of Player $i$, $i\in \{1,2,\dots,n\}$ and Player $n+1$.
\label{t2}
\end{theorem}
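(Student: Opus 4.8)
The plan is to run the implication in the reverse direction to Theorem \ref{t1}: start from a Nash equilibrium $(\tilde{x}_1,\dots,\tilde{x}_n)$ of the non zero-sum main game — whose existence is now the hypothesis — and reconstruct, at this profile, the minimax equality \eqref{as0} for each pair consisting of Player $i$ and Player $n+1$, \emph{without} invoking Lemma \ref{l1}. The whole argument rests on the separable structure $\pi_i=\varphi_i(x_1,\dots,x_n)+\psi(f)$ together with $\min_{f\in F}\psi(f)=0$, attained uniquely at $f=a$, so that \eqref{a} and the surrounding observations are available.

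First I would fix $i$ and set $x_k=\tilde{x}_k$ for every $k\neq i$ with $k\le n$. Because $(\tilde{x}_1,\dots,\tilde{x}_n)$ is a Nash equilibrium, $\tilde{x}_i$ is the (unique, by assumption) maximizer of $\varphi_i(\tilde{x}_1,\dots,x_i,\dots,\tilde{x}_n)$ over $x_i\in X$; since $\psi(f)$ does not involve $x_i$, $\tilde{x}_i$ also maximizes $\pi_i(\tilde{x}_1,\dots,x_i,\dots,\tilde{x}_n,f)$ over $x_i$ for every fixed $f$, which is exactly the observation recorded around \eqref{a}.

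Next I would compute the two iterated optima separately. On one side, $\min_{f\in F}\pi_i(\tilde{x}_1,\dots,x_i,\dots,\tilde{x}_n,f)=\varphi_i(\tilde{x}_1,\dots,x_i,\dots,\tilde{x}_n)+\min_{f\in F}\psi(f)=\varphi_i(\tilde{x}_1,\dots,x_i,\dots,\tilde{x}_n)$, so maximizing over $x_i$ gives $\varphi_i(\tilde{x}_1,\dots,\tilde{x}_n)$, attained at $x_i=\tilde{x}_i$. On the other side, $\max_{x_i\in X}\pi_i(\tilde{x}_1,\dots,x_i,\dots,\tilde{x}_n,f)=\varphi_i(\tilde{x}_1,\dots,\tilde{x}_n)+\psi(f)$ — the maximizer being $\tilde{x}_i$ regardless of $f$ — and minimizing over $f$ gives $\varphi_i(\tilde{x}_1,\dots,\tilde{x}_n)$, attained at $f=a$. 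Comparing, both sides of \eqref{as0} equal $\varphi_i(\tilde{x}_1,\dots,\tilde{x}_n)$, and $(\tilde{x}_i,a)$ is a saddle point of $\pi_i(\tilde{x}_1,\dots,\cdot,\dots,\tilde{x}_n,\cdot)$; this is Sion's minimax equality for the pair of Player $i$ and Player $n+1$.

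The step I expect to be the main obstacle is conceptual rather than computational: pinning down precisely what ``Sion's minimax theorem for the pair'' should assert once quasi-concavity and quasi-convexity are no longer assumed as hypotheses. A Nash equilibrium hands us only the profile $\tilde{x}_{-i}$, so the argument above directly yields \eqref{as0} evaluated at that profile; to state it for an arbitrary fixed choice of the $x_k$'s one must note that the same separable computation goes through verbatim as long as the relevant extrema over the compact sets $X$ and $F$ are attained — which is where continuity and compactness, and the uniqueness postulates used to identify the optimizers $\tilde{x}_i$ and $a$, do their work. Making that reduction explicit, and verifying that the route genuinely avoids re-using \eqref{as0} itself (so that it is the honest converse of Theorem \ref{t1}), is the delicate part.
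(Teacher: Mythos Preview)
Your proposal is correct and rests on the same ingredient as the paper --- the additive separation $\pi_i=\varphi_i+\psi(f)$ together with $\min_f\psi=0$ at $f=a$ --- but the execution differs. You compute the two iterated optima directly and observe that both equal $\varphi_i(\tilde{x}_1,\dots,\tilde{x}_n)$; the paper instead works through an inequality chain, first showing
\[
\min_{f}\max_{x_i}\pi_i\;\le\;\max_{x_i}\pi_i(\cdot,a)\;=\;\min_{f}\pi_i(\cdot,x_i^*,\cdot,f)\;\le\;\max_{x_i}\min_{f}\pi_i,
\]
and then combining this with the universal inequality $\max\min\le\min\max$ to force equality. Your route is shorter and more transparent; the paper's route has the mild advantage that it foregrounds the saddle-point interpretation via the two separate bounds. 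On the scope issue you flag in the last paragraph: you are right that this is the delicate point, and the paper in fact proves \eqref{as0} for an \emph{arbitrary} fixed profile $x_{-i}$, using only that a best response $x_i^*$ to that profile exists (the Nash equilibrium itself is invoked only at the very end to identify $x_i^*=\tilde{x}_i$). Your observation that ``the same separable computation goes through verbatim'' at any profile is exactly how to close that gap, and it matches what the paper does implicitly.
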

\begin{proof}
Let $(\tilde{x}_1, \tilde{x}_2, \dots, \tilde{x}_n)$ be a Nash equilibrium of the $n$ players non zero-sum game. Consequently,
\begin{align*}
\varphi_i(\tilde{x}_1, \dots, \tilde{x}_i, \dots, \tilde{x}_n)\geq \varphi_i(\tilde{x}_1, \dots, x_i, \dots, \tilde{x}_n)\ \mathrm{for\ any}\ x_i,\ i\in\{1, 2, \dots, n\}.
\end{align*}
This is based on the fact that there exists a value of $x_i$, $x_i^*$, such that given $x_1$, $x_2$, $\dots$, $x_n$ other than $x_i$,
\begin{align*}
\varphi_i({x}_1, \dots, {x}^*_i, \dots, {x}_n)\geq \varphi_i({x}_1, \dots, x_i, \dots, {x}_n)\ \mathrm{for\ any}\ x_i.
\end{align*}
Thus,
\begin{align*}
\pi_i(x_1, \dots, x_i^*, \dots, x_n,f)\geq \pi_i(x_1, \dots, x_i, \dots, x_n,f)\ \mathrm{for\ any}\ x_i\ \mathrm{and\ any\ value\ of}\ f,\ i\in\{1, 2, \dots, n\},
\end{align*}
Since
\[\arg\min_{f\in F}\pi_i(x_1, \dots, x_i^*, \dots, x_n,f)=\arg\max_{f\in F}\psi(f)=a,\]
we have
\begin{align}
&\min_{f\in F}\max_{x_i\in X_i}\pi_i(x_1, \dots, x_i, \dots, x_n,f)\leq \max_{x_i\in X_i}\pi_i(x_1, \dots, x_i, \dots, x_n,a) \label{e3}\\
=&\min_{f\in F}\pi_i(x_1, \dots, x_i^*, \dots, x_n,f)\leq \max_{x_i\in X_i}\min_{x_n\in x_n}\pi_i(x_1, \dots, x_i, \dots, x_n,f),\ i\in \{1, 2, \dots, n-1\}.\notag
\end{align}
On the other hand, since
\[\min_{f\in F}\pi_i(x_1, \dots, x_i, \dots, x_n,f)\leq \pi_i(x_1, \dots, x_i, \dots, x_n,f),\]
we have
\[\max_{x_i\in X_i}\min_{f\in F}\pi_i(x_1, \dots, x_i, \dots, x_n,f)\leq \max_{x_i\in X_i}\pi_i(x_1, \dots, x_i, \dots, x_n,f).\]
This inequality holds for any $f$. Thus,
\[\max_{x_i\in X_i}\min_{f\in F}\pi_i(x_1, \dots, x_i, \dots, x_n,f)\leq \min_{f\in F}\max_{x_i\in X_i}\pi_i(x_1, \dots, x_i, \dots, x_n,f).\]
With (\ref{e3}), we obtain
\begin{equation}
\max_{x_i\in X_i}\min_{f\in F}\pi_i(x_1, \dots, x_i, \dots, x_n,f)=\min_{f\in F}\max_{x_i\in X_i}\pi_i(x_1, \dots, x_i, \dots, x_n,f),\label{t1-1}
\end{equation}
given $x_1$, $x_2$, $\dots$, $x_n$ other than $x_i$. (\ref{e3}) and (\ref{t1-1}) imply
\[\max_{x_i\in X_i}\min_{f\in F}\pi_i(x_1, \dots, x_i, \dots, x_n,f)=\max_{x_i\in X_i}\pi_i(x_1, \dots, x_i, \dots, x_n,a),\]
\[\min_{f\in F}\max_{x_i\in X_i}\pi_i(x_1, \dots, x_i, \dots, x_n,f)=\min_{f\in F}\pi_i(x_1, \dots, x^*_i, \dots, x_n,f).\]
From
\[\min_{f\in F}\pi_i(x_1, \dots, x_i, \dots, x_n,f)\leq \pi_i(x_1, \dots, x_i, \dots, x_n,a),\]
and
\[\max_{x_i\in X_i}\min_{f\in F}\pi_i(x_1, \dots, x_i, \dots, x_n,f)=\max_{x_i\in X_i}\pi_i(x_1, \dots, x_i, \dots, x_n,a),\]
we have
\[\arg\max_{x_i\in X_i}\min_{f\in F}\pi_i(x_1, \dots, x_i, \dots, x_n,f)=\arg\max_{x_i\in X_i}\pi_i(x_1, \dots, x_i, \dots, x_n,a)=x_i^*,\ i\in \{1, 2, \dots, n-1\}.\]
We also have
\[\max_{x_i\in X_i}\pi_i(x_1, \dots, x_i, \dots, x_n,f)\geq \pi_i(x_1, \dots, x_i^*, \dots, x_n,f),\]
and
\[\min_{f\in F}\max_{x_i\in X_i}\pi_i(x_1, \dots, x_i, \dots, x_n,f)=\min_{f\in F}\pi_i(x_1, \dots, x_i^*, \dots, x_n,f).\]
Therefore, we get
\[\arg\min_{f\in F}\max_{x_i\in X_i}\pi_i(x_1, \dots, x_i, \dots, x_n,f)=\arg\min_{f\in F}\pi_i(x_1, \dots, x^*_i, \dots, x_n,f)=a,\ i\in \{1, 2, \dots, n-1\}.\]
Thus, if $(x_1, x_2, \dots, x_n)=(\tilde{x}_1,\tilde{x}_2,\dots,\tilde{x}_n)$,
\[\arg\max_{x_i\in X_i}\min_{f\in F}\pi_i(\tilde{x}_1, \dots, {x}_i, \dots, \tilde{x}_n,f)=\tilde{x}_i,\ i\in \{1, 2, \dots,n\}.\]
\end{proof}

\section{An example}

Consider a three firms oligopoly with differentiated goods. There are Firm 1, 2 and 3. Assume that the inverse demand functions are
\[p_1=a-x_1-bx_2-bx_3,\]
\[p_2=a-bx_1-x_2-bx_3,\]
\[p_3=a-bx_1-bx_2-x_3,\]
with $0<b<1$. $p_1$, $p_2$, $p_3$ are the prices of the goods of Firm 1, 2, 3. $x_1$, $x_2$, $x_3$ are the outputs of the firms. The cost functions of the firms  with the subsidies are
\[c_1(x_1)=c_1x_1-(f-a)^2,\]
\[c_2(x_2)=c_2x_2-(f-a)^2,\]
and
\[c_3(x_3)=c_3x_3-(f-a)^2.\]
$f$ is a non-negative number and $a$ is a positive number. $c_1$, $c_2$, $c_3$ are constant numbers. The profits of the firms are
\begin{equation*}
\pi_1=(a-x_1-bx_2-bx_3)x_1-c_1x_1+(f-a)^2,
\end{equation*}
\begin{equation*}
\pi_2=(a-bx_1-x_2-bx_3)x_2-c_2x_2+(f-a)^2,
\end{equation*}
and
\begin{equation*}
\pi_3=(a-bx_1-bx_2-x_3)x_3-c_3x_3+(f-a)^2.
\end{equation*}
The condition for minimization of $\pi_1$ with respect to $f$ is
\[\frac{\partial \pi_1}{\partial f}=2(f-a)=0.\]
Thus, $f=a$. Substituting this into $\pi_1$,
\[\left.\pi_1\right|_{f=a}=(a-x_1-bx_2-bx_3)x_1-c_1x_1.\]
The condition for maximization of $\left.\pi_1\right|_{f=a}$ with respect to $x_1$ is
\[\frac{\partial \left.\pi_1\right|_{f=a}}{\partial x_1}=a-2x_1-bx_2-bx_3-c_1=0.\]
Thus, 
\begin{equation*}
\arg\max_{x_1\in X}\min_{f\in F}\pi_1(x_1,x_2,x_3, f)=\frac{a-c_1-bx_2-bx_3}{2}.
\end{equation*}
Similarly, we get
\begin{equation*}
\arg\max_{x_2\in X}\min_{f\in F}\pi_2(x_1,x_2,x_3, f)=\frac{a-c_2-bx_1-bx_3}{2},
\end{equation*}
\begin{equation*}
\arg\max_{x_3\in X}\min_{f\in F}\pi_3(x_1,x_2,x_3, f)=\frac{a-c_3-bx_1-bx_2}{2}.
\end{equation*}
Solving
\[x_1=\frac{a-c_1-bx_2-bx_3}{2},\]
\[x_2=\frac{a-c_2-bx_1-bx_3}{2},\]
\[x_3=\frac{a-c_3-bx_1-bx_2}{2},\]
we obtain
\[x_1=\frac{(2-b)a+bc_3+bc_2-(2+b)c_1}{2(2-b)(b+1)},\]
\[x_2=\frac{(2-b)a+bc_3+bc_1-(2+b)c_2}{2(2-b)(b+1)},\]
\[x_3=\frac{(2-b)a+bc_1+bc_2-(2+b)c_3}{2(2-b)(b+1)}.\]

They are the same as the equilibrium outputs of the oligopoly with Firm 1, 2 and 3.

In this paper we presented a zero-sum game formulation of a non zero-sum $n$ players game considering the $n+1$-th player and virtual subsidies to the players provided by the $n+1$-th player.

\end{document}